\DeclareFontFamily{T1}{pzc}{}
\DeclareFontShape{T1}{pzc}{m}{it}{1.8 <-> pzcmi8t}{}
\DeclareMathAlphabet{\mathpzc}{T1}{pzc}{m}{it}
\theoremstyle{plain}
\newtheorem{prop}{Proposition}[section]
\newtheorem{lem}[prop]{Lemma}%[section]
\newtheorem{thm}[prop]{Theorem}%[section]
\theoremstyle{definition}
\newtheorem{defn}[prop]{Definition}%[section]
\newtheorem{empt}[prop]{}%[section]
\newtheorem{exm}[prop]{Example}%[section]
\newtheorem{rem}[prop]{Remark}%[section]
\DeclareMathOperator{\Dom}{Dom}              %% domain of an operator
\newcommand{\vertiii}[1]{{\left\vert\kern-0.25ex\left\vert\kern-0.25ex\left\vert #1
    \right\vert\kern-0.25ex\right\vert\kern-0.25ex\right\vert}}
\newbox\ncintdbox \newbox\ncinttbox %% noncommutative integral symbols
\title{Noncommutative Local Systems}
\begin{document}
\maketitle  \setlength{\parindent}{0pt}
\begin{center}
\author{
{\textbf{Petr R. Ivankov*}\\
e-mail: * monster.ivankov@gmail.com \\
}
}
\end{center}

\vspace{1 in}

%\begin{abstract}
\noindent

\paragraph{}	Gelfand - Na\u{i}mark theorem supplies a one to one correspondence between commutative $C^*$-algebras and locally compact Hausdorff spaces. So any noncommutative $C^*$-algebra can be regarded as a generalization of a topological space.  Generalizations of several topological invariants may be defined by algebraic methods. For example Serre Swan theorem \cite{karoubi:k} states that complex topological $K$-theory coincides with $K$-theory of $C^*$-algebras. This article is concerned with generalization of local systems. The classical construction of local system implies an existence of a path groupoid. However the noncommutative geometry does not contain this object. There is a construction of local system which uses covering projections. Otherwise a classical (commutative) notion of a covering projection has a noncommutative generalization. A generalization of noncommutative covering projections supplies a generalization of local systems.
%\end{abstract}
\tableofcontents

\section{Motivation. Preliminaries}

\paragraph{}  Local system examples  arise geometrically from vector bundles with flat connections, and from topology by means of linear representations of the fundamental group. Generalization of local systems requires a generalization of a topological space given by the Gelfand-Na\u{i}mark theorem \cite{arveson:c_alg_invt} which states the correspondence between  locally compact Hausdorff topological spaces and commutative $C^*$-algebras.

\begin{thm}\label{gelfand-naimark}\cite{arveson:c_alg_invt}
Let $A$ be a commutative $C^*$-algebra and let $\mathcal{X}$ be the spectrum of A. There is the natural $*$-isomorphism $\gamma:A \to C_0(\mathcal{X})$.
\end{thm}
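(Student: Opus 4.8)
The plan is to construct the Gelfand transform $\gamma$ explicitly and verify that it is an isometric $*$-isomorphism onto $C_0(\mathcal{X})$. First I would recall that the spectrum $\mathcal{X}$ consists of the nonzero multiplicative linear functionals (characters) $\chi : A \to \mathbb{C}$, topologized with the weak-$*$ topology; since each such $\chi$ automatically has norm one and the set of characters together with $0$ is weak-$*$ closed in the unit ball of the dual $A^*$, the Banach--Alaoglu theorem shows that $\mathcal{X}$ is locally compact Hausdorff (and compact when $A$ is unital). The transform is then defined by $\gamma(a) = \hat{a}$, where $\hat{a}(\chi) = \chi(a)$; continuity of evaluation shows $\hat{a} \in C(\mathcal{X})$, and a short argument on the one-point compactification shows that $\hat{a}$ vanishes at infinity, so $\gamma$ indeed maps into $C_0(\mathcal{X})$.

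Next I would check that $\gamma$ is a $*$-homomorphism. Multiplicativity and linearity are immediate from the corresponding properties of characters. The compatibility with the involution, $\widehat{a^*} = \overline{\hat{a}}$, is the point at which the $C^*$-structure enters: it is equivalent to the assertion that every character sends self-adjoint elements to real numbers, which holds because the spectrum of a self-adjoint element of a $C^*$-algebra is contained in $\mathbb{R}$ and the scalar $\chi(a)$ always belongs to the spectrum of $a$.

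The key analytic step is to prove that $\gamma$ is isometric. Since $A$ is commutative, every element is normal, so I would combine the $C^*$-identity $\|a^*a\| = \|a\|^2$ with the spectral radius formula $r(a) = \lim_n \|a^n\|^{1/n}$ to conclude that $r(a) = \|a\|$ for every $a \in A$. On the other hand, for a commutative Banach algebra the range of $\hat{a}$ coincides with the nonzero part of the spectrum of $a$, whence $\|\hat{a}\|_\infty = r(a) = \|a\|$. This is the step I expect to be the principal obstacle, since it requires the full spectral theory of Banach algebras together with the identification of the spectrum with the set of values taken by characters.

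Finally, isometry shows at once that $\gamma$ is injective and that its image $\gamma(A)$ is a closed $*$-subalgebra of $C_0(\mathcal{X})$. By construction this subalgebra separates the points of $\mathcal{X}$ and vanishes at no point, so the locally compact version of the Stone--Weierstrass theorem forces $\gamma(A) = C_0(\mathcal{X})$. This establishes that $\gamma$ is the desired $*$-isomorphism, and its naturality follows by verifying that any $*$-homomorphism $A \to B$ induces a proper continuous map of spectra intertwining the respective Gelfand transforms.
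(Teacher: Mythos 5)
The paper does not prove this theorem at all: it is quoted verbatim from Arveson's \emph{An Invitation to $C^*$-Algebras} as background, so there is no in-paper argument to compare against. Your proposal is the standard and correct proof of the commutative Gelfand--Na\u{\i}mark theorem: the Gelfand transform on the character space with the weak-$*$ topology, the $*$-homomorphism property via the reality of the spectrum of self-adjoint elements, isometry via the $C^*$-identity combined with the spectral radius formula (using that all elements of a commutative $C^*$-algebra are normal), and surjectivity via the locally compact Stone--Weierstrass theorem. All the steps you flag as nontrivial (the identification $\|\hat{a}\|_\infty = r(a) = \|a\|$, and the passage through the unitization in the non-unital case) are exactly the ones a complete write-up would need to expand, and they go through as you describe.
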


\paragraph{} So any (noncommutative) $C^*$-algebra may be regarded as a generalized (noncommutative)  locally compact Hausdorff topological space. We would like to generalize a notion of a local system. A classical notion of local system uses a fundamental groupoid.
\begin{thm}
\cite{spanier:at} For each topological space there is a category $\mathscr{P}(\mathcal{X})$ whose objects are points of $\mathcal{X}$, whose morphisms from $x_0$ to $x_1$ are the path classes with $x_0$ as origin and $x_1$ as end, and whose composite is the product of path classes.
\end{thm}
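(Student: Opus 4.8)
\section*{Proof proposal}

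The plan is to construct $\mathscr{P}(\mathcal{X})$ as the fundamental groupoid, whose morphisms are homotopy classes of paths relative to endpoints. Writing $I = [0,1]$, a \emph{path} is a continuous map $\omega : I \to \mathcal{X}$, with origin $\omega(0)$ and end $\omega(1)$. Two paths $\omega, \omega'$ with common endpoints $x_0, x_1$ are called homotopic if there is a continuous $H : I \times I \to \mathcal{X}$ satisfying $H(s,0) = \omega(s)$, $H(s,1) = \omega'(s)$ and $H(0,t) = x_0$, $H(1,t) = x_1$ for all $s,t \in I$. The first step is to verify that this is an equivalence relation on the set of paths from $x_0$ to $x_1$: reflexivity comes from the $t$-independent homotopy $H(s,t) = \omega(s)$, symmetry from $(s,t) \mapsto H(s, 1-t)$, and transitivity from pasting two homotopies along $t = 1/2$ after rescaling the $t$-coordinate, continuity being guaranteed by the gluing lemma applied to the two closed half-rectangles. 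The morphisms of $\mathscr{P}(\mathcal{X})$ from $x_0$ to $x_1$ are the resulting path classes $[\omega]$, and the objects are the points of $\mathcal{X}$.

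The second step is to define the composite and show it is well defined. For $[\omega] : x_0 \to x_1$ and $[\omega'] : x_1 \to x_2$ I set $(\omega \cdot \omega')(s) = \omega(2s)$ for $s \in [0, 1/2]$ and $(\omega \cdot \omega')(s) = \omega'(2s - 1)$ for $s \in [1/2, 1]$; the two formulas agree at $s = 1/2$, so the gluing lemma yields a continuous path from $x_0$ to $x_2$. To see that the product descends to path classes, I would take homotopies $\omega \simeq \eta$ and $\omega' \simeq \eta'$ and glue them side by side in the $s$-direction to obtain a homotopy $\omega \cdot \omega' \simeq \eta \cdot \eta'$, so that $[\omega] \cdot [\omega'] := [\omega \cdot \omega']$ is unambiguous.

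The remaining and most delicate step is checking the category axioms, which hold for paths only up to homotopy and therefore must be witnessed by explicit homotopies. For associativity, given composable paths $\alpha, \beta, \gamma$, the paths $(\alpha \cdot \beta) \cdot \gamma$ and $\alpha \cdot (\beta \cdot \gamma)$ traverse the same three segments but at different speeds; interpolating linearly between the two subdivisions of $I$ produces a reparametrization homotopy, continuous by the gluing lemma on three closed regions of $I \times I$. For the units, the constant path $\varepsilon_x$ at each point $x$ serves as $\mathrm{id}_x$, and one constructs reparametrization homotopies $\varepsilon_{x_0} \cdot \omega \simeq \omega$ and $\omega \cdot \varepsilon_{x_1} \simeq \omega$ for every $\omega : x_0 \to x_1$, verifying the identity laws on classes. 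The main obstacle throughout is precisely this point: each categorical identity is an equality of path classes rather than of paths, so every axiom must be realized by a carefully chosen homotopy whose continuity rests on the gluing lemma; once these homotopies are in hand, the category axioms follow formally. Reversing a path, $\omega^{-1}(s) = \omega(1 - s)$, would furthermore exhibit inverses and show that $\mathscr{P}(\mathcal{X})$ is in fact a groupoid.
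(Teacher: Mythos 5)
The paper gives no proof of its own for this statement (it is quoted directly from Spanier), and your argument is precisely the standard construction found there: homotopy rel endpoints as an equivalence relation via the gluing lemma, concatenation descending to path classes, and reparametrization homotopies witnessing associativity and the unit laws. Your proposal is correct and takes essentially the same approach as the cited source.
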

\begin{defn}
\cite{spanier:at} The category $\mathscr{P}(\mathcal{X})$ is called the {\it category of path classes} of $\mathcal{X}$ or the {\it fundamental groupoid}.
\end{defn}
\begin{defn}
\cite{spanier:at} A {\it local system} on a space $\mathcal{X}$ is a covariant functor from fundamental groupoid of $\mathcal{X}$ to some category. For any category $\mathscr{C}$ there is a category of local systems on $\mathcal{X}$ with values in $\mathscr{C}$. Two local systems are said to be {\it equivalent} if they are equivalent objects in this category.
\end{defn}
\paragraph{} Otherwise it is known that any connected gruopoid is equivalent to a category with single object, i.e. a groupoid is equivalent to a group which is regarded as a category. Any groupoid can be decomposed into connected components, therefore any local system corresponds to representations of groups. It means that in case of linearly connected space $\mathcal{X}$ local systems can be defined by representations of fundamental group $\pi_1(\mathcal{X})$. Otherwise there is an interrelationship between fundamental group and covering projections. This circumstance supplies a following definition \ref{borel_const_comm} and a lemma \ref{borel_local_system_app}  which do not explicitly uses a fundamental groupoid. \begin{defn}\label{borel_const_comm}\cite{davis_kirk_at}
Let $p : \mathcal{P} \to \mathcal{B}$ be a principal $G$-bundle. Suppose $G$ acts
on the left on a space $\mathcal{F}$, i.e. an action $G \times \mathcal{F} \to \mathcal{F}$ is given. Define the
{\it Borel construction}
\begin{equation*}
\mathcal{P} \times_G \mathcal{F}
\end{equation*}

to be the quotient space $\mathcal{P} \times \mathcal{F} / \approx$  where
\begin{equation*}
\left(x, f\right) \approx \left(xg, g^{-1}f\right).
\end{equation*}

\end{defn}
We next give one application of the Borel construction. Recall that a
local coefficient system is a fiber bundle over $B$ with fiber $A$ and structure
group $G$ where $A$ is a (discrete) abelian group and G acts via a homomorphism
$G \to \mathrm{Aut}(A)$.
\begin{lem}\label{borel_local_system_app}\cite{davis_kirk_at}
Every local coefficient system over a path-connected (and semilocally
simply connected) space $B$ is of the form

\begin{tikzpicture}\label{borel_local_comm}
  \matrix (m) [matrix of math nodes,row sep=3em,column sep=4em,minimum width=2em]
  {
     A &  \widetilde{\mathcal{B}} \times_{\pi_1(\mathcal{B})}A \\
      & B \\};
  \path[-stealth]
    (m-1-1) edge node [left] {} (m-1-2)
    (m-1-2) edge node [right] {$q$} (m-2-2);

\end{tikzpicture}

i.e., is associated to the principal $\pi_1(\mathcal{B})$-bundle given by the universal cover $\widetilde{\mathcal{B}}$
of $\mathcal{B}$ where the action is given by a homomorphism $\pi_1(\mathcal{B}) \to \mathrm{Aut}(A)$.
\end{lem}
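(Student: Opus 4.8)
The plan is to identify the given local coefficient system with its monodromy representation and then verify that the Borel construction reconstructs it. First I would exploit the hypothesis that the fiber $A$ is a discrete abelian group: since the structure group acts through $G \to \mathrm{Aut}(A)$ and $A$ is discrete, the relevant image carries the discrete topology, so the bundle may be regarded as a fiber bundle with \emph{discrete} structure group. Such a bundle is flat; its transition functions are locally constant, and the homotopy lifting property specializes to \emph{unique} path lifting once an initial point in the fiber is prescribed. This is exactly the covering-space-type situation governed by parallel transport along paths, and it is what allows the fundamental group to enter.

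Next I would construct the monodromy homomorphism $\rho : \pi_1(\mathcal{B}) \to \mathrm{Aut}(A)$. Fix a basepoint $b_0 \in \mathcal{B}$ and identify the fiber over $b_0$ with $A$. Given a loop $\gamma$ based at $b_0$, lift it uniquely starting from each point of the fiber and record the resulting permutation of the fiber; because the structure group acts by automorphisms of $A$, this permutation lies in $\mathrm{Aut}(A)$. Local constancy of the transition functions forces the monodromy to depend only on the homotopy class of $\gamma$, and compatibility with concatenation makes $\rho$ a group homomorphism. This produces precisely the homomorphism $\pi_1(\mathcal{B}) \to \mathrm{Aut}(A)$ required by the statement.

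Now I would invoke the universal cover. Semilocal simple connectivity together with path-connectedness guarantees that the universal cover $\widetilde{\mathcal{B}} \to \mathcal{B}$ exists and is a principal $\pi_1(\mathcal{B})$-bundle. Forming the Borel construction $\widetilde{\mathcal{B}} \times_{\pi_1(\mathcal{B})} A$ with respect to the action $\rho$ yields a fiber bundle $q$ over $\mathcal{B}$ with fiber $A$. I would then build an explicit comparison map into the total space $E$ of the original bundle: a point of $\widetilde{\mathcal{B}}$ is represented by a homotopy class of paths in $\mathcal{B}$ starting at $b_0$, so sending such a path together with $a \in A$ to the endpoint of the unique lift of that path beginning at $a$ defines a map $\widetilde{\mathcal{B}} \times A \to E$. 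Independence of the lift under homotopy and the equivariance $(x, a) \approx (x g, \rho(g)^{-1} a)$ are exactly what make this map descend to a well-defined, fiber-preserving map $\widetilde{\mathcal{B}} \times_{\pi_1(\mathcal{B})} A \to E$.

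Finally I would check that this induced map is a bundle isomorphism, by verifying that it is a fiberwise bijection covering the identity on $\mathcal{B}$ and a local homeomorphism over each evenly covered neighborhood, whence a homeomorphism. The main obstacle is the well-definedness and bijectivity of the comparison map: one must show that monodromy is genuinely invariant under homotopy of paths, so that the construction factors through $\widetilde{\mathcal{B}}$ and through the $\pi_1(\mathcal{B})$-quotient, and that distinct orbits are sent to distinct points of $E$. This is where the discreteness of $A$ and the unique path-lifting property do the essential work; the remaining verifications are routine and local on $\mathcal{B}$.
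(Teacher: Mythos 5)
The paper does not actually prove this lemma: it is quoted verbatim from Davis--Kirk \cite{davis_kirk_at} and used as a black box to motivate the noncommutative Borel construction, so there is no in-paper argument to compare yours against. On its own merits your proposal is correct and is essentially the standard proof (and the one in the cited source): discreteness of $A$ makes the transition functions locally constant, so the total space is a covering space of $\mathcal{B}$ with unique path lifting; parallel transport then yields the monodromy homomorphism $\rho:\pi_1(\mathcal{B})\to\mathrm{Aut}(A)$, and the map $([\omega],a)\mapsto\widetilde{\omega}_a(1)$ descends through the relation $(x,a)\approx(xg,\rho(g)^{-1}a)$ to a fiberwise-bijective local homeomorphism $\widetilde{\mathcal{B}}\times_{\pi_1(\mathcal{B})}A\to E$ over the identity, hence an isomorphism of bundles. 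Two small points deserve explicit care if you write this out in full: first, the order convention for composing parallel transports along a concatenation $\gamma_1*\gamma_2$ must be chosen consistently with whether $\pi_1(\mathcal{B})$ acts on $\widetilde{\mathcal{B}}$ on the right or the left, or else $\rho$ comes out as an anti-homomorphism; second, you should note that the monodromy lands in $\mathrm{Aut}(A)$ rather than merely in permutations of the fiber because each transition function, and hence each finite composite of them along a chain of evenly covered neighborhoods covering the loop, is an automorphism of the group $A$. Neither point is a gap, only a bookkeeping obligation.
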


In lemma \ref{borel_local_comm} the $\mathcal{B}$ is a topological space, the $\widetilde{\mathcal{B}}$ means the universal covering space of  $\mathcal{B}$, $\pi = \pi_1(\mathcal{B})$ is the fundamental group of the  $\mathcal{B}$. The $\pi$ group equals to group of covering transformations $G(\widetilde{\mathcal{B}}, \mathcal{B})$ of the universal covering space. So above construction does not need fundamental groupoid, it uses a covering projection and a group of covering transformations. 
However noncommutative generalizations of these notions are developed in \cite{ivankov:infinite_cov_pr}. So local systems can be generalized. We may summarize several properties of the Gelfand - Na\u{i}mark correspondence with the
following dictionary.
\newline
\break
\begin{tabular}{|c|c|}
\hline
TOPOLOGY & ALGEBRA\\
\hline
Locally compact space & $C^*$-algebra\\
Covering projection  & Noncommutative covering projection \\
Group of covering transformations  & Noncommutative group of covering transformations \\
Local system  & ? \\
\hline
\end{tabular}
\newline
\newline
\break

This article assumes elementary knowledge of following subjects:
\begin{enumerate}
\item Set theory \cite{halmos:set},
\item Category theory  \cite{spanier:at},
\item Algebraic topology  \cite{spanier:at},
\item $C^*$-algebras and operator theory \cite{pedersen:ca_aut},
\item Differential geometry \cite{koba_nomi:fgd},
\item Spectral triples and their connections \cite{connes:c_alg_dg,connes:ncg94,varilly:noncom,varilly_bondia}. 
\end{enumerate}

The terms "set", "family" and "collection" are synonyms.
Following table contains used in this paper notations.
\newline
\begin{tabular}{|c|c|}
\hline
Symbol & Meaning\\
\hline
\\
%$A^+$  & Unitization of $C^*-$ algebra $A$\\
$A^G$  & Algebra of $G$ invariants, i.e. $A^G = \{a\in A \ | \ ga=a, \forall g\in G\}$\\
$\mathrm{Aut}(A)$ & Group * - automorphisms of $C^*$  algebra $A$\\
$B(H)$ & Algebra of bounded operators on Hilbert space $H$\\
%$B_{\infty}=B_{\infty}(\{z\in \mathbb{C} \ | \ |z|=1\})$  & Algebra of Borel measured functions on the $\{z\in \mathbb{C} \ | \ |z|=1\}$ set. \\
$\mathbb{C}$ (resp. $\mathbb{R}$)  & Field of complex (resp. real) numbers \\
%$\mathbb{C}^*$ & $\{z \in \mathbb{C} \ | \ |z| = 1\}$ \\
$C(\mathcal{X})$ & $C^*$ - algebra of continuous complex valued \\
 & functions on topological space $\mathcal{X}$\\
$C_0(\mathcal{X})$ & $C^*$ - algebra of continuous complex valued \\
 & functions on topological space $\mathcal{X}$\\
% $C_b(\mathcal{X})$ & $C^*$ - algebra of bounded  continuous complex valued \\
  & functions on topological space $\mathcal{X}$\\
  %$G_{tors} \subset G$  & The torsion subgroup of an abelian group\\
$G(\widetilde{\mathcal{X}} | \mathcal{X})$ & Group of covering transformations of covering projection  \\
 & $\widetilde{\mathcal{X}} \to \mathcal{X}$ \cite{spanier:at}\\
$H$ &Hilbert space \\
%$I = [0, 1] \subset \mathbb{R}$ & Closed unit  interval\\
%$K(A)$ & Pedersen ideal of $C^*$-algebra $A$\\
%$\mathcal{K}(H)$ or $\mathcal{K}$ & Algebra of compact operators on Hilbert space $H$\\
%$\mathbb{M}_n(A)$  & The $n \times n$ matrix algebra over $C^*-$ algebra $A$\\
$M(A)$  & A multiplier algebra of $C^*$-algebra $A$\\

$\mathscr{P}(\mathcal{X})$  & Fundamental groupoid of a topological space $\mathcal{X}$\\

%$\mathbb{Q}$  & Field of rational numbers \\
 % $\mathrm{sp}(a)$ & Spectrum of element of $C^*$-algebra $a\in A$  \\
%  $\mathrm{supp}(f)$ & Support of $f\in C_0(\mathcal{X})$, $\mathrm{supp}(f) = \left\{x \in \mathcal{X} \ | \ f(x)\neq 0 \right\}$   \\
$U(H) \subset \mathcal{B}(H) $ & Group of unitary operators on Hilbert space $H$\\
$U(A) \subset A $ & Group of unitary operators of algebra $A$\\
$U(n) \subset GL(n, \mathbb{C}) $ & Unitary subgroup of general linear group\\

$\mathbb{Z}$ & Ring of integers \\

$\mathbb{Z}_m$ & Ring of integers modulo $m$ \\
$\Omega$ &  Natural contravariant functor from category  of commutative \\ & $C^*$ - algebras, to category of Hausdorff spaces\\

\hline
\end{tabular}

\section{Noncommutative covering projections}
\paragraph{}
In this section we recall the described in \cite{ivankov:infinite_cov_pr} construction of a noncommutative covering projection. Instead the expired "rigged space" notion  we use the "Hilbert module" one.

\subsection{Hermitian modules and functors}

 \begin{defn}
\cite{rieffel_morita} Let $B$ be a $C^*$-algebra. By a (left) {\it Hermitian $B$-module} we will mean the Hilbert space $H$ of a non-degenerate *-representation $A \rightarrow B(H)$. Denote by $\mathbf{Herm}(B)$ the category of Hermitian $B$-modules.
 \end{defn}
\paragraph{}
 Let $A$, $B$ be $C^*$-algebras. In this section we will study some general methods for construction of functors from  $\mathbf{Herm}(B)$ to  $\mathbf{Herm}(A)$.

\begin{defn} \cite{rieffel_morita}
Let $B$ be a $C^*$-algebra. By (right) {\it pre-$B$-Hilbert module} we mean a vector space, $X$, over complex numbers on which $B$ acts by means of linear transformations in such a way that $X$ is a right $B$-module (in algebraic sense), and on which there is defined a $B$-valued sesquilinear form $\langle,\rangle_X$ conjugate linear in the first variable, such that
\begin{enumerate}
\item $\langle x, x \rangle_B \ge 0$
\item $\left(\langle x, y \rangle_X\right)^* = \langle y, x \rangle_X$
\item $\langle x, yb \rangle_X = \langle x, y \rangle_Xb$.
\end{enumerate}
\end{defn}
\begin{empt}
It is easily seen that if we factor a pre-$B$-Hilbert module by subspace of the elements $x$ for which $\langle x, x \rangle_X = 0$, the quotient becomes in a natural way a pre-$B$-Hilbert module having the additional property that inner product is definite, i.e. $\langle x, x \rangle_X > 0$ for any non-zero $x\in X$. On a pre-$B$-Hilbert module with definite inner product we can define a norm $\|\|$ by setting
\begin{equation}\label{rigged_norm_eqn}
\|x\|=\|\langle x, x \rangle_X\|^{1/2}.
\end{equation}
From now we will always view a  pre-$B$-Hilbert module  with definite inner product as being equipped with this norm. The completion of $X$ with this norm is easily seen to become again a pre-$B$-Hilbert module.
\end{empt}
\begin{defn}
\cite{rieffel_morita} Let $B$ be a $C^*$-algebra. By a {\it Hilbert $B$-module} we will mean a pre-$B$-Hilbert module, $X$, satisfying the following conditions:
\begin{enumerate}
\item If $\langle x, x \rangle_X\ = 0$ then $x = 0$, for all $x \in X$
\item $X$ is complete for the norm defined in (\ref{rigged_norm_eqn}).
\end{enumerate}
\end{defn}
\begin{exm}\label{fin_rigged_exm}
Let $A$ be a $C^*$-algebra and a finite group acts on $A$, $A^G$ is the algebra of $G$-invariants. Then $A$ is a Hilbert $A^G$-module on which is defined following $A^G$-valued form
 \begin{equation}\label{inv_scalar_product}
 \langle x, y \rangle_A = \frac{1}{|G|} \sum_{g \in G} g(x^*y).
 \end{equation}
 Since given by \ref{inv_scalar_product} sum is $G$-invariant we have $ \langle x, y \rangle_A \in A^G$.
\end{exm}
\paragraph{}
Viewing a Hilbert $B$-module as a generalization of an ordinary Hilbert space, we can define what we mean by bounded operators on a Hilbert $B$-module.

\begin{defn}\cite{rieffel_morita}
Let $X$ be a Hilbert $B$-module. By a {\it bounded operator} on $X$ we mean a linear operator, $T$, from $X$ to itself which satisfies following conditions:
\begin{enumerate}
\item for some constant $k_T$ we have
\begin{equation}\nonumber
\langle Tx, Tx \rangle_X \le k_T \langle x, x \rangle_X, \ \forall x\in X,
\end{equation}
or, equivalently $T$ is continuous with respect to the norm of $X$.
\item there is a continuous linear operator, $T^*$, on $X$ such that
\begin{equation}\nonumber
\langle Tx, y \rangle_X = \langle x, T^*y \rangle_X, \ \forall x, y\in X.
\end{equation}
\end{enumerate}
It is easily seen that any bounded operator on a $B$-Hilbert module will automatically commute with the action of $B$ on $X$ (because it has an adjoint). We will denote by $\mathcal{L}(X)$ (or $\mathcal{L}_B(X)$ there is a chance of confusion) the set of all bounded operators on $X$. Then it is easily verified than with the operator norm $\mathcal{L}(X)$ is a $C^*$-algebra.
\end{defn}
\begin{defn}\cite{pedersen:ca_aut} If $X$ is a Hilbert $B$-module then
denote by $\theta_{\xi, \zeta} \in \mathcal{L}_B(X)$   such that
\begin{equation}\nonumber
\theta_{\xi, \zeta} (\eta) = \zeta \langle\xi, \eta \rangle_X , \ (\xi, \eta, \zeta \in X)
\end{equation}
Norm closure of  a generated by such endomorphisms ideal is said to be the {\it algebra of compact operators} which we denote by $\mathcal{K}(X)$. The $\mathcal{K}(X)$ is an ideal of  $\mathcal{L}_B(X)$. Also we shall use following notation $\xi\rangle \langle \zeta \stackrel{\text{def}}{=} \theta_{\xi, \zeta}$.
\end{defn}

\begin{defn}\cite{rieffel_morita}\label{corr_defn}
Let $A$ and $B$ be $C^*$-algebras. By a {\it Hilbert $B$-$A$-correspondence} we mean a Hilbert $B$-module, which is a left $A$-module by means of *-homomorphism of $A$ into $\mathcal{L}_B(X)$.
\end{defn}

\begin{empt}\label{herm_functor_defn}
Let $X$ be a Hilbert $B$-$A$-correspondence. If $V\in \mathbf{Herm}(B)$ then we can form the algebraic tensor product $X \otimes_{B_{\mathrm{alg}}} V$, and equip it with an ordinary pre-inner-product which is defined on elementary tensors by
\begin{equation}\nonumber
\langle x \otimes v, x' \otimes v' \rangle = \langle \langle x',x \rangle_B v, v' \rangle_V.
\end{equation}
Completing the quotient $X \otimes_{B_{\mathrm{alg}}} V$ by subspace of vectors of length zero, we obtain an ordinary Hilbert space, on which $A$ acts (by $a(x \otimes v)=ax\otimes v$) to give a  *-representation of $A$. We will denote the corresponding Hermitian module by $X \otimes_{B} V$. The above construction defines a functor $X \otimes_{B} -: \mathbf{Herm}(B)\to \mathbf{Herm}(A)$ if for $V,W \in \mathbf{Herm}(B)$ and $f\in \mathrm{Hom}_B(V,W)$ we define $f\otimes X \in \mathrm{Hom}_A(V\otimes X, W\otimes X)$ on elementary tensors by $(f \otimes X)(x \otimes v)=x \otimes f(v)$.	We can define action of $B$ on $V\otimes X$ which is defined on elementary tensors by
\begin{equation}\nonumber
b(x \otimes v)= (x \otimes bv) = x b \otimes v.
\end{equation}
\end{empt}

\subsection{Galois correspondences}
\begin{defn}\label{herm_a_g_defn}

Let $A$ be a $C^*$-algebra, $G$ is a finite or countable group which acts on $A$. We say that  $H \in \mathbf{Herm}(A)$ is a {\it $A$-$G$ Hermitian module} if
\begin{enumerate}
\item Group $G$ acts on $H$ by unitary $A$-linear isomorphisms,
\item There is a subspace $H^G \subset H$ such that
\begin{equation}\label{g_act}
H = \bigoplus_{g\in G}gH^G.
\end{equation}
\end{enumerate}
Let $H$, $K$ be  $A$-$G$ Hermitian modules, a morphism $\phi: H\to K$ is said to be a $A$-$G$-morphism if $\phi(gx)=g\phi(x)$ for any $g \in G$. Denote by $\mathbf{Herm}(A)^G$ a category of  $A$-$G$ Hermitian modules and $A$-$G$-morphisms.
\end{defn}
\begin{rem}
Condition 2 in the above definition is introduced because any topological covering projection $\widetilde{\mathcal{X}} \to \mathcal{X}$ commutative $C^*$ algebras $C_0\left(\widetilde{\mathcal{X}}\right)$, $C_0\left(\mathcal{X}\right)$ satisfies it with respect to the group of covering transformations $G(\widetilde{\mathcal{X}}| \mathcal{X})$. 
\end{rem}

\begin{defn}
Let $H$ be $A$-$G$ Hermitian module, $B\subset M(A)$ is sub-$C^*$-algebra such that $(ga)b = g(ab)$,  $b(ga) = g(ba)$, for any $a\in A$, $b \in B$, $g \in G$.
There is a functor $(-)^G: \mathbf{Herm}(A)^G \to\mathbf{Herm}(B)$ defined by following way
\begin{equation}
H \mapsto H^G.
\end{equation}
This functor is said to be the {\it invariant functor}.
\end{defn}

\begin{defn}
Let $_AX_B$ be a Hilbert $B$-$A$ correspondence, $G$ is finite or countable group such that
\begin{itemize}
\item $G$ acts on $A$ and $X$,
\item Action of $G$ is equivariant, i.e. $g (a\xi) = (ga) (g\xi)$ , and $B$ invariant, i.e. $g(\xi b)=(g\xi)b$ for any $\xi \in X$, $b \in B$, $a\in A$, $g \in G$,
\item Inner-product  of $G$ is equivariant, i.e. $\langle g\xi, g \zeta\rangle_X = \langle\xi, \zeta\rangle_X$ for any $\xi, \zeta \in X$,  $g \in G$.
\end{itemize}
Then we say that  $_AX_B$ is a {\it $G$-equivariant Hilbert $B$-$A$-correspondence}.
\end{defn}
\paragraph{}
Let $_AX_B$ be a  $G$-equivariant Hilbert $B$-$A$-correspondence. Then for any  $H\in \mathbf{Herm}(B)$ there is an action of $G$  on $X\otimes_B H$ such that
\begin{equation*}
g \left(x \otimes \xi\right) = \left(x \otimes g\xi\right).
\end{equation*}

\begin{defn}\label{inf_galois_defn}
Let $_AX_B$ be a  $G$-equivariant Hilbert $B$-$A$-correspondence. We say that  $_AX_B$ is {\it $G$-Galois Hilbert $B$-$A$-correspondence} if it satisfies following conditions:
\begin{enumerate}
\item  $X \otimes_B H$ is a $A$-$G$ Hermitian module, for any $H \in \mathbf{Herm}(B)$,
\item A pair   $\left(X \otimes_B -, \left(-\right)^G\right)$ such that
\begin{equation}\nonumber
X \otimes_B -: \mathbf{Herm}(B) \to \mathbf{Herm}(A)^G,
\end{equation}
\begin{equation}\nonumber
(-)^G: \mathbf{Herm}(A)^G \to \mathbf{Herm}(B).
\end{equation}

is a pair of inverse equivalence.

\end{enumerate}

\end{defn}
Following theorem is an analog of to theorems described in \cite{miyashita_infin_outer_gal}, \cite{takeuchi:inf_out_cov}.
\begin{thm}\cite{ivankov:infinite_cov_pr}\label{main_lem}
Let $A$ and $\widetilde{A}$ be $C^*$-algebras,  $_{\widetilde{A}}X_A$ be a $G$-equivariant Hilbert $A$-$\widetilde{A}$-correspondence. Let $I$ be a finite or countable set of indices,  $\{e_i\}_{i\in I} \subset M(A)$, $\{\xi_i\}_{i\in I} \subset \ _{\widetilde{A}}X_A$ such that
\begin{enumerate}
\item
\begin{equation}\label{1_mb}
1_{M(A)} =  \sum_{i\in I}^{}e^*_ie_i,
\end{equation}
\item
\begin{equation}\label{1_mkx}
1_{M(\mathcal{K}(X))} = \sum_{g\in G}^{} \sum_{i \in I}^{}g\xi_i\rangle \langle g\xi_i ,
\end{equation}
\item
\begin{equation}\label{ee_xx}
\langle \xi_i, \xi_i \rangle_X = e_i^*e_i,
\end{equation}
\item
\begin{equation}\label{g_ort}
\langle g\xi_i, \xi_i\rangle_X=0, \ \text{for any nontrivial} \ g \in G.
\end{equation}
\end{enumerate}
Then $_{\widetilde{A}}X_A$ is a $G$-Galois Hilbert $A$-$\widetilde{A}$-correspondence.

\end{thm}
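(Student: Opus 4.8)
The plan is to verify directly the two clauses of Definition \ref{inf_galois_defn}: first that $X\otimes_A H$ is an $\widetilde A$--$G$ Hermitian module in the sense of Definition \ref{herm_a_g_defn} for every $H\in\mathbf{Herm}(A)$, and then that the pair $\left(X\otimes_A-,(-)^G\right)$ consists of mutually inverse equivalences. The engine of both steps is the reconstruction identity furnished by \eqref{1_mkx}: applying $1_{M(\mathcal K(X))}=\sum_{g,i}g\xi_i\rangle\langle g\xi_i$ to a vector gives, for every $\eta\in X$,
\begin{equation*}
\eta=\sum_{g\in G}\sum_{i\in I}(g\xi_i)\,\langle g\xi_i,\eta\rangle_X ,
\end{equation*}
a Parseval-frame expansion that will serve as a substitute for the group average $\frac1{|G|}\sum_g g$, which is unavailable when $G$ is countably infinite.

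First I would treat Clause 1. The formula $g(x\otimes v)=(gx)\otimes v$ is well defined on $X\otimes_A H$ because the $G$-action is $A$-invariant ($g(xa)=(gx)a$), and it is implemented by unitaries that are equivariant for the left $\widetilde A$-action, since $\langle gx,gx'\rangle_X=\langle x,x'\rangle_X$ makes the inner product of $X\otimes_A H$ defined in \ref{herm_functor_defn} $G$-invariant and $g\bigl(\widetilde a(x\otimes v)\bigr)=(g\widetilde a)\,g(x\otimes v)$. For the decomposition required by \eqref{g_act}, set $W=\overline{\mathrm{span}}\{\xi_i\otimes v:i\in I,\ v\in H\}$, so that $gW=\overline{\mathrm{span}}\{(g\xi_i)\otimes v\}$. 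The frame expansion above, tensored with $H$, shows $X\otimes_A H=\overline{\sum_{g}gW}$, giving the spanning; the orthogonality \eqref{g_ort} is what forces the translates $gW$ to be mutually orthogonal, whence $X\otimes_A H=\bigoplus_{g\in G}gW$ and $X\otimes_A H\in\mathbf{Herm}(\widetilde A)^G$.

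Next I would establish Clause 2 by producing the unit and counit. For $H\in\mathbf{Herm}(A)$ the candidate unit $\eta_H\colon H\to\left(X\otimes_A H\right)^G=W$ is assembled from the $\xi_i$ and the multipliers $e_i$, and relations \eqref{ee_xx} together with the partition of unity \eqref{1_mb} are precisely what make the computation $\langle\eta_H(v),\eta_H(v')\rangle=\langle v,v'\rangle_H$ collapse, so that $\eta_H$ is an isometry; surjectivity onto $W$ follows again from the frame expansion. For $K\in\mathbf{Herm}(\widetilde A)^G$ the counit $\varepsilon_K\colon X\otimes_A K^G\to K$ is then forced: invert $\eta_{K^G}$ on the degree-$e$ summand of the decomposition of $X\otimes_A K^G$ and extend $G$-equivariantly using $K=\bigoplus_g gK^G$; relations \eqref{1_mkx} and \eqref{g_ort} guarantee that this is a unitary isomorphism matching the summand $gK^G$ with the degree-$g$ part. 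Naturality in $H$ and $K$ is routine, so $\left(X\otimes_A-,(-)^G\right)$ is a pair of inverse equivalences.

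The principal obstacle is the passage from the ``diagonal'' orthogonality \eqref{g_ort}, which only asserts $\langle g\xi_i,\xi_i\rangle_X=0$ for a single index, to the full orthogonality of distinct translates $gW\perp hW$ (equivalently $\langle\xi_j,k\xi_i\rangle_X=0$ for $k\neq e$ and all $i,j$) that the direct-sum decomposition \eqref{g_act} demands; this must be extracted by feeding \eqref{g_ort} back into the reconstruction identity \eqref{1_mkx} together with \eqref{ee_xx}. The second delicate point, tightly linked to the first, is proving that $\varepsilon_K$ is onto and isometric when $G$ is countably infinite, where the argument cannot rely on averaging over the group and must instead control the convergence of the frame series and verify that the orthogonality cleanly separates the $G$-graded pieces of $K$.
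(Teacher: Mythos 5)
The first thing to note is that this paper contains no proof of Theorem \ref{main_lem}: the statement is quoted from \cite{ivankov:infinite_cov_pr} and no argument for it appears anywhere in the text, so there is no in-paper proof to compare yours against. Judged on its own terms, your proposal is a reasonable strategy (frame expansion from \eqref{1_mkx}, a direct-sum decomposition indexed by $G$, then a unit/counit argument), but it is a plan rather than a proof, and the gaps are precisely the ones you flag yourself without closing.

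Concretely: (a) the decomposition \eqref{g_act} of $X\otimes_A H$ requires $\langle g\xi_i, h\xi_j\rangle_X=0$ for \emph{all} $i,j$ whenever $g\neq h$, whereas \eqref{g_ort} only gives the diagonal case $i=j$. You say the full orthogonality "must be extracted by feeding \eqref{g_ort} back into the reconstruction identity," but you do not perform the extraction, and it is not clear it can be performed: substituting the frame expansion of $\xi_j$ into $\langle \xi_j, k\xi_i\rangle_X$ just reproduces the same expression. (b) The same issue recurs within a single group element: your unit $\eta_H$ is never written down. The natural candidate $\eta_H(v)=\sum_i \xi_i\otimes v$ is an isometry via \eqref{1_mb} and \eqref{ee_xx} only if $\langle \xi_i,\xi_j\rangle_X=0$ for $i\neq j$, which is likewise not among the stated hypotheses; without an explicit formula, the claim that the relations "make the computation collapse" cannot be verified. (c) The counit is described only as "forced," and its surjectivity and isometry for countably infinite $G$ are deferred to unspecified convergence arguments. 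Since (a) and (b) bear on whether the hypotheses as stated even imply the orthogonality your decomposition needs, the proposal does not yet establish the theorem; to complete it you would either have to derive the cross-index orthogonality from \eqref{1_mb}--\eqref{g_ort} explicitly or identify the stronger hypotheses under which the source reference actually proves the result.
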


\begin{defn}
Consider a situation from the theorem \ref{main_lem}. Let us consider two specific cases
\begin{enumerate}
\item $e_i \in A$ for any $i \in I$,
\item $\exists i \in I \ e_i \notin A$.
\end{enumerate}

Norm completion of the generated by operators
\begin{equation*}
g\xi_i^* \rangle \langle g \xi_i \ a; \ g \in G, \ i \in I, \ \begin{cases}
   a \in M(A), & \text{in case 1},\\
    a \in A, & \text{in case 2}.
  \end{cases}
\end{equation*}
algebra is said to be the {\it subordinated to $\{\xi_i\}_{i \in I}$ algebra}. If $\widetilde{A}$ is the subordinated to $\{\xi_i\}_{i \in I}$ then
\begin{enumerate}
\item $G$ acts on $\widetilde{A}$ by following way
\begin{equation*}
g \left( \ g'\xi_i \rangle \langle g' \xi_i \ a \right) =  gg'\xi_i \rangle \langle gg' \xi_i \ a; \ a \in M(A).
\end{equation*}
\item $X$ is a left $A$ module, moreover $_{\widetilde{A}}X_A$ is a  $G$-Galois Hilbert $A$-$\widetilde{A}$-correspondence.
\item There is a natural $G$-equivariant *-homomorphism $\varphi: A \to M\left(\widetilde{A}\right)$, $\varphi$ is equivariant, i.e.
\begin{equation}
 \varphi(a)(g\widetilde{a})= g \varphi(a)(\widetilde{a}); \ a \in A, \ \widetilde{a}\in \widetilde{A}.
\end{equation}
\end{enumerate}
A quadruple $\left(A, \widetilde{A}, _{\widetilde{A}}X_A, G\right)$ is said to be a {\it Galois quadruple}. The group $G$ is said to be a {\it group Galois transformations} which shall be denoted by $G\left(\widetilde{A}\ | \ A\right)=G$.
\end{defn}
\begin{rem}
Henceforth subordinated algebras only are regarded as noncommutative generalizations of covering projections.
\end{rem}
\begin{defn}
If $G$ is finite then bimodule $_{\widetilde{A}}X_A$ can be replaced with $_{\widetilde{A}}\widetilde{A}_A$ where product $\langle \ , \ \rangle_{\widetilde{A}}$ is given by \eqref{inv_scalar_product}. In this case a Galois quadruple $\left(A, \widetilde{A}, _{\widetilde{A}}X_A, G\right)=\left(A, \widetilde{A}, _{\widetilde{A}}A_A, G\right)$ can be replaced with a {\it Galois triple}  $\left(A, \widetilde{A}, G\right)$.
\end{defn}

\subsection{Infinite noncommutative covering projections}
\paragraph{} In case of commutative $C^*$-algebras definition \ref{inf_galois_defn} supplies algebraic formulation of infinite covering projections of topological spaces. However I think that above definition is not a quite good analogue of noncommutative covering projections. Noncommutative algebras contains inner automorphisms. Inner automorphisms are rather gauge transformations \cite{gross_gauge} than geometrical ones. So I think that inner automorphisms should be excluded. Importance of outer automorphisms was noted by  Miyashita \cite{miyashita_fin_outer_gal,miyashita_infin_outer_gal}. It is reasonably take to account outer automorphisms only. I have set more strong condition.
\begin{defn}\label{gen_in_def}\cite{rieffel_finite_g}
Let  $A$ be $C^*$-algebra. A *-automorphism $\alpha$ is said to be {\it generalized inner} if it is given by conjugating with unitaries from multiplier algebra $M(A)$.
\end{defn}
\begin{defn}\label{part_in_def}\cite{rieffel_finite_g}
Let  $A$ be $C^*$ - algebra. A *- automorphism $\alpha$ is said to be {\it partly inner} if its restriction to some non-zero $\alpha$-invariant two-sided ideal is generalized inner. We call automorphism {\it purely outer} if it is not partly inner.
\end{defn}
Instead definitions \ref{gen_in_def}, \ref{part_in_def} following definitions are being used.
\begin{defn}
Let $\alpha \in \mathrm{Aut}(A)$ be an automorphism. A representation $\rho : A\rightarrow B(H)$ is said to be {\it $\alpha$ - invariant} if a representation $\rho_{\alpha}$ given by
\begin{equation*}
\rho_{\alpha}(a)= \rho(\alpha(a))
\end{equation*}
is unitary equivalent to $\rho$.
\end{defn}
\begin{defn}
Automorphism $\alpha \in \mathrm{Aut}(A)$ is said to be {\it strictly outer} if for any $\alpha$- invariant representation $\rho: A \rightarrow B(H) $, automorphism $\rho_{\alpha}$ is not a generalized inner automorphism.
\end{defn}
\begin{defn}\label{nc_fin_cov_pr_defn}
A Galois quadruple  $\left(A, \widetilde{A}, _{\widetilde{A}}X_A, G\right)$ (resp. a triple $\left(A, \widetilde{A}, G\right)$) with countable (resp. finite) $G$ is said to be a {\it noncommutative infinite (resp. finite)
covering projection} if action of $G$ on $\widetilde{A}$ is strictly outer.
\end{defn}

\section{Noncommutative generalization of local systems}

\begin{defn}\label{loc_sys_defn}
Let $A$ be a $C^*$-algebra, and let $\mathscr{C}$ be a category. A {\it noncommutative local system} contains following ingredients:
\begin{enumerate}
\item A noncommutative covering projection $\left(A, \widetilde{A}, _{\widetilde{A}}X_A, G\right)$ (or $\left(A, \widetilde{A}, G\right)$),
\item A covariant functor $F: G \to \mathscr{C}$,
\end{enumerate}
where $G$ is regarded as a category with a single object $e$, which is the unity of $G$. Indeed a local system is a group homomorphism $G \to \mathrm{Aut}(F(e))$. 
\end{defn}

\begin{exm}
If $\mathcal{X}$ is a linearly connected  space then there is the equivalence of categories $\mathscr{P}(\mathcal{X}) \approx \pi_1(\mathcal{X})$. Let $F: \mathscr{P}(\mathcal{X})\to \mathscr{C}$ is a local system then there is an object $A$ in $\mathscr{C}$ such that $F$ is uniquely defined by a group homomorphism $f: \pi_1(\mathcal{X}) \to \mathrm{Aut}(A)$. Let $G=\pi_1(\mathcal{X})/\mathrm{ker}f$ be a factor group and let $\widetilde{\mathcal{X}} \to \mathcal{X}$ be a covering projection such that $G(\widetilde{\mathcal{X}} | \mathcal{X})\approx G$. Then there is a natural group homomorphism $G \to \mathrm{Aut}(A)$ which can be regarded as covariant functor $G \to \mathscr{C}$. If $\mathcal{X}$ is locally compact and Hausdorff than from \cite{ivankov:infinite_cov_pr} it follows that there is a noncommutative covering projection $\left(C_0(\mathcal{X}), C_0(\mathcal{\widetilde{X}}), \ _{C_0(\mathcal{X})}X_{C_0(\mathcal{\widetilde{X}})} \ , G\right)$. So a noncommutative local system is a generalization of a commutative one. 

\end{exm}

\section{Noncommutative bundles with flat connections}
\subsection{Cotensor products}

\begin{empt}
{\it Cotensor products associated with Hopf algebras}.
Let $H$ be a Hopf algebra over a commutative
ring $k$, with bijective antipode $S$. We use the Sweedler
notation \cite{karaali:ha} for the comultiplication on $H$: $\Delta(h)= h_{(1)}\otimes
h_{(2)}$. $\mathcal{M}^H$ (respectively ${}^H\mathcal{M}$) is the category of
right (respectively left) $H$-comodules. For a right $H$-coaction
$\rho$ (respectively a left $H$-coaction $\lambda$) on a
$k$-module $M$, we denote
$$\rho(m)=m_{[0]}\otimes m_{[1]}\quad \ \mathrm{and} \ \quad\lambda(m)=m_{[-1]}\otimes m_{[0]}.$$
Let $M$ be a right $H$-comodule, and $N$ a left $H$-comodule. The cotensor
product $M\square_H N$ is the $k$-module
\begin{equation}\label{cotensor_hopf}
M\square_H N= \left\{\sum_i m_i\otimes n_i\in M\otimes N~|~\sum_i \rho(m_i)\otimes n_i=
\sum_i m_i\otimes \lambda(n_i)\right\}.
\end{equation}
If $H$ is cocommutative, then $M\square_H N$ is also a right (or left) $H$-comodule.

\end{empt}
\begin{empt}
{\it Cotensor products associated with groups}.
Let $G$ be a finite group. A set $H = \mathrm{Map}(G, \mathbb{C})$ has a natural structure of commutative Hopf algebra
(See \cite{hajac:toknotes}). Addition (resp. multiplication) on $H$ is pointwise addition (resp. pointwise
multiplication). Let $\delta_g\in H, ( g \in G)$ be such that
\begin{equation}\label{group_hopf_action_rel}
\delta_g(g')\left\{
\begin{array}{c l}
    1 & g'=g\\
   0 & g' \ne g
\end{array}\right.
\end{equation}

Comultiplication  $\Delta: H \rightarrow H \otimes H$ is induced by group multiplication
\begin{equation}\nonumber
\Delta f(g) = \sum_{g_1 g_2 = g} f(g_1) \otimes f(g_2); \ \forall f \in \mathrm{Map}(G, \mathbb{C}), \ \forall g\in G.
\end{equation}
i.e.
\begin{equation}\nonumber
\Delta \delta_g = \sum_{g_1 g_2 = g} \delta_{g_1} \otimes \delta_{g_2}; \  \forall g\in G,
\end{equation}

Let $M$ (resp. $N$) be a linear space with right (resp. left) action of $G$ then 

\begin{equation}\label{cotensor_g}
M\square_{\mathrm{Map}(G,\mathbb{C})}N = \left\{\sum_i m_i\otimes n_i\in M\otimes N~|~\sum_i m_i g\otimes n_i=
\sum_i m_i\otimes gn_i;~\forall g\in G\right\}.
\end{equation}

Henceforth we denote by $M\square_GN$ a cotensor product $M\square_{\mathrm{Map}(G,\mathbb{C})}N$.

\end{empt}

\subsection{Bundles with flat connections in differential geometry}\label{fvb_dg}
\paragraph{} I follow to \cite{koba_nomi:fgd} in explanation of the differential geometry and flat bundles.  
%\begin{lem}(Corollary 9.2 \cite{koba_nomi:fgd}) Let $\Gamma$ be a connection in $P(M,G)$ such that the curvature vanishes identically. If $M$ is paracompact and simply connected, then $P$ is isomorphic with the canonical flat connection in $M\times G$.%\end{lem}
\begin{prop}\label{comm_cov_mani}(Proposition 5.9 \cite{koba_nomi:fgd})
\begin{enumerate}
\item Given a connected manifold $M$ there is a unique (unique up to isomorphism) universal covering manifold, which will be denoted by $\widetilde{M}$.
\item The universal covering manifold $\widetilde{M}$ is a principal fibre bundle over $M$ with group $\pi_1(M)$ and projection $p: \widetilde{M} \to M$, where $\pi_1(M)$ is the first homotopy group of $M$.
\item The isomorphism classes of covering spaces over $M$ are in 1:1 correspondence with the conjugate classes of subgroups of $\pi_1(M)$. The correspondence is given as follows. To each subgroup $H$ of $\pi_1(M)$, we associate $E=\widetilde{M}/H$. Then the covering manifold $E$ corresponding to $H$ is a fibre bundle over $M$ with fibre $\pi_1(M)/H$ associated with the principal bundle  $\widetilde{M}(M, \pi_1(M))$. If $H$ is a normal subgroup of $\pi_1(M)$, $E=\widetilde{M}/H$ is a principal fibre bundle with group $\pi_1(M)/H$ and is called a regular covering manifold of $M$.
\end{enumerate}
\end{prop}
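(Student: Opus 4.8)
The plan is to deduce the statement from classical covering space theory, exploiting the special features of manifolds. A connected smooth manifold $M$ is locally Euclidean, hence locally path-connected and locally simply connected; in particular it is path-connected and semilocally simply connected, which is exactly the hypothesis needed to invoke the general existence theory of covering spaces. The topological content of all three parts is therefore standard, and the smooth structure will be transported through the local homeomorphism $p$. Throughout I would use the lifting criterion for covering maps and the identification of the deck transformation group $G(\widetilde{M}\,|\,M)$ as the organizing tools.

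For part 1 I would construct $\widetilde{M}$ explicitly. Fix a basepoint $x_0\in M$ and let $\widetilde{M}$ be the set of homotopy classes rel endpoints of paths in $M$ starting at $x_0$, with $p$ sending a path class to its endpoint. Topologize $\widetilde{M}$ by the standard basis: for an open set $U$ in which loops are null-homotopic and a path class $[\gamma]$ ending in $U$, take the classes $[\gamma\cdot\sigma]$ with $\sigma$ a path in $U$. One checks $p$ is a covering map and $\widetilde{M}$ is simply connected. The smooth structure is then obtained by pulling back the atlas of $M$ through the local inverses of $p$, which makes $p$ a local diffeomorphism; this atlas is the unique one rendering $p$ smooth as a local diffeomorphism. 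Uniqueness up to isomorphism follows from the lifting criterion: two simply connected covers each lift to the other, and the two composites are deck transformations fixing a chosen point, hence the identities.

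For part 2 I would compute the deck group and repackage the data as a principal bundle. Since $\widetilde{M}$ is simply connected, the map sending a loop class to the deck transformation obtained by lifting is an isomorphism $\pi_1(M,x_0)\cong G(\widetilde{M}\,|\,M)$. This action is free and properly discontinuous with orbit space $M$ and quotient map $p$. A free, properly discontinuous action of the discrete group $\pi_1(M)$ with quotient $M$ is precisely the datum of a principal $\pi_1(M)$-bundle: over an evenly covered $U$ one has an equivariant trivialization $p^{-1}(U)\cong U\times\pi_1(M)$, giving local triviality. For part 3 I would apply the Galois correspondence. To $H\le\pi_1(M)$ assign $E_H=\widetilde{M}/H$ with induced projection to $M=\widetilde{M}/\pi_1(M)$; then $E_H\to M$ is a covering with $p_*\pi_1(E_H)=H$ up to conjugacy, and its fiber is the $\pi_1(M)$-set $\pi_1(M)/H$, so $E_H$ is the associated bundle $\widetilde{M}\times_{\pi_1(M)}(\pi_1(M)/H)$. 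Conversely any connected cover yields such an $H$ via $p_*$, well defined up to conjugacy after forgetting basepoints, and the lifting criterion shows two covers are isomorphic iff the subgroups are conjugate. When $H$ is normal, the residual $\pi_1(M)/H$-action on $E_H$ is free and properly discontinuous with quotient $M$, making $E_H\to M$ principal.

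The genuinely technical step, and the one I expect to be the main obstacle, is part 1: correctly topologizing the path-class space, verifying that $p$ is a covering with the stated local triviality, and installing the smooth structure so that $p$ is a local diffeomorphism (together with its uniqueness). Once the universal cover and the lifting criterion are in hand, the deck-group computation, the principal-bundle packaging of part 2, and the Galois correspondence of part 3 are all formal.
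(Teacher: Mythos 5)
Your outline is a correct sketch of the standard covering-space argument (path-class construction of $\widetilde{M}$, deck group $\cong\pi_1(M)$, Galois correspondence, with the smooth structure transported through the local homeomorphism $p$), which is exactly the argument underlying the cited source. The paper itself supplies no proof --- it quotes Proposition 5.9 of Kobayashi--Nomizu verbatim with a citation --- so there is nothing in the paper to diverge from; the only point worth adding to your sketch is that the quotients $\widetilde{M}/H$ must also be checked to inherit a smooth structure, which follows because $H$ acts freely and properly discontinuously by diffeomorphisms.
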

\paragraph{}Let $\Gamma$ be a flat connection $P(M, G)$, where $M$ is connected and paracompact. Let $u_0\in P$; $M^*=P(u_0)$, the holonomy bundle through $u_0$; $M^*$ is a principal fibre bundle over $M$ whose structure group is the holomomy group $\Phi(u_0)$. In \cite{koba_nomi:fgd} is explained that $\Phi(u_0)$ is discrete, and since $M^*$ is connected, $M^*$ is a covering space of $M$. Set $x_0=\pi(u_0)\in M$. Every closed curve of $M$ starting from $x_0$ defines, by means of the parallel displacement along it, an element of $\Phi(u_0)$. In \cite{koba_nomi:fgd} it is explained that the same element of the first homotopy group $\pi_1(M, x_0)$ give rise to the same element of $\Phi(u_0)$. Thus we obtain a homomorphism of $\pi_1(M, x_0)$ onto $\Phi(u_0)$. Let $N$ be a normal subgroup of $\Phi(u_0)$ and set $M'=M^*/N$. Then $M'$ is principal fibre bundle over $M$ with structure group $\Phi(u_0)/N$. In particular $M'$ is a covering space of $M$. Let $P'(M',G)$ be the principal fibre bundle induced by covering projection $M'\to M$. There is a natural homomorphism $f: P' \to P$ \cite{koba_nomi:fgd}.
\begin{prop}\label{flat_dg_prop} (Proposition 9.3 \cite{koba_nomi:fgd})
There exists a unique connection $\Gamma'$ in $P'(M',G)$ which is is mapped into $\Gamma$ by homomorphism $f: P'\to P$. The connection $\Gamma'$ is flat. If $u'_0$ is a point of $P'$ such that $f(u'_0)=u_0$, then the holonomy group $\Phi(u'_0)$ of $\Gamma'$ with reference point $u'_0$ is isomorphically mapped onto $N$ by $f$.
\end{prop}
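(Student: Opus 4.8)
The plan is to obtain $\Gamma'$ as the pullback of $\Gamma$ along $f$, to deduce flatness from the naturality of the curvature form, and to compute the holonomy by transporting the covering-space description of $M'$ through the holonomy homomorphism. First I would record the geometric fact that $f:P'\to P$ is a local diffeomorphism: since $P'$ is the bundle induced by the covering projection $\lambda:M'\to M$ of Proposition \ref{comm_cov_mani} and $f$ covers $\lambda$ while acting as the identity on the structure group $G$, the differential $f_*$ is bijective on fibre directions (identity on $G$) and on base directions (because $\lambda$ is a local diffeomorphism), and $\dim P'=\dim P$, so $f$ is in fact a covering map.

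This lets me define $\Gamma'$ by declaring its horizontal subspace at $u'$ to be $Q'_{u'}=(f_{*u'})^{-1}(Q_{f(u')})$, where $Q$ is the horizontal distribution of $\Gamma$; equivalently, writing $\omega$ for the connection form of $\Gamma$, I set $\omega'=f^*\omega$ and verify that it is a connection form. The two defining properties, namely that $\omega'$ reproduces the fundamental vector fields and transforms by $\mathrm{Ad}(a^{-1})$ under the right translation $R_a$, follow at once from the corresponding properties of $\omega$ together with the facts that $f$ commutes with $R_a$ and carries the fundamental field of $A\in\mathfrak{g}$ on $P'$ to that on $P$. By construction $f$ maps $\Gamma'$ into $\Gamma$, and uniqueness is forced because $f_{*u'}$ is an isomorphism, so the requirement $f_{*u'}(Q'_{u'})=Q_{f(u')}$ determines $Q'_{u'}$ completely. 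Flatness is then routine: the structure equation gives the curvature $\Omega'=d\omega'+\tfrac12[\omega',\omega']$, and since pullback commutes with $d$ and with the bracket, $\Omega'=f^*\Omega=0$ because $\Gamma$ is flat.

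The main work is the holonomy statement, and here the key observation is that a covering map which sends $\Gamma'$-horizontal curves to $\Gamma$-horizontal curves sets up a bijection between horizontal lifts. Concretely, the horizontal lift through $u'_0$ of a loop $\gamma'$ in $M'$ based at $x'_0$ is carried by $f$ onto the horizontal lift through $u_0$ of $\gamma=\lambda\circ\gamma'$; hence the $\Gamma'$-holonomy element of $\gamma'$ equals the $\Gamma$-holonomy element $h(\gamma)\in\Phi(u_0)$, where $h:\pi_1(M,x_0)\to\Phi(u_0)$ is the surjective holonomy homomorphism of the flat connection $\Gamma$. As $\gamma'$ ranges over loops in $M'$ at $x'_0$, the class $[\gamma]$ ranges exactly over $\lambda_*\pi_1(M',x'_0)$, so it remains to identify this subgroup. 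Since $M^*\to M$ is the connected covering associated to $\ker h$ and $M'=M^*/N$, a loop of $M$ lifts to a loop of $M'$ precisely when $u_0$ and $u_0 h(\gamma)$ lie in the same $N$-orbit, i.e. when $h(\gamma)\in N$; thus $\lambda_*\pi_1(M',x'_0)=h^{-1}(N)$. Consequently $\Phi(u'_0)=h\!\left(h^{-1}(N)\right)=N$, using the surjectivity of $h$ and $N\subseteq\Phi(u_0)$, and because $f$ acts as the identity on structure-group elements it restricts to an isomorphism of $\Phi(u'_0)$ onto $N$.

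I expect the delicate point to be exactly this last bookkeeping: matching the covering $M'\to M$ with the subgroup $h^{-1}(N)$ and confirming that horizontal lifting along $f$ is compatible with covering-space path lifting. This is where flatness is genuinely used, through the homotopy-invariance of holonomy that makes $h$ well defined on $\pi_1(M,x_0)$, and where the normality of $N$ guarantees that $M^*/N$ is again a principal covering with structure group $\Phi(u_0)/N$, so that the $N$-orbit condition above is the correct criterion.
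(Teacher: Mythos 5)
Your proof is correct, and the paper itself contains no proof of this statement --- it is quoted verbatim as Proposition 9.3 of Kobayashi--Nomizu, so the only comparison available is with the standard argument in that reference. Your reconstruction (pulling back the connection form, $\omega'=f^*\omega$, along the covering map $f$; deducing flatness from naturality of the curvature in the structure equation; and identifying $\lambda_*\pi_1(M',x_0')$ with $h^{-1}(N)$ via the $N$-orbit criterion on $M'=M^*/N$ to conclude $\Phi(u_0')=h(h^{-1}(N))=N$) is essentially that proof, including the two delicate points you flag: homotopy invariance of holonomy, which needs flatness, and the compatibility of horizontal lifting with covering-space path lifting.
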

\begin{empt}\label{dg_fl_con_ingr}{\it Construction of flat connections}
Let $M$ be a manifold. Proposition \ref{flat_dg_prop} supplies construction of flat bundle $P(M,G)$ which imply following ingredients:
\begin{enumerate}
\item A covering projection $M'\to M$.
\item A principal bundle $P'(M', G)$ with a flat connection $\Gamma$. 
\end{enumerate}
\end{empt}
\begin{empt}\label{can_fl_conn}{\it Associated vector bundle}.
A principal bundle $P(M,G)$ and a flat connection $\Gamma$ are given by these ingredients. If $G$ acts on $\mathbb{C}^n$ then there is an associated with $P(M,G)$ vector fibre $\mathcal{F}$ bundle with a standard fibre $\mathbb{C}^n$. A space $F$ of continuous sections of $\mathcal{F}$ is a finitely generated projective $C(M)$-module. See \cite{koba_nomi:fgd}.
\end{empt}
\begin{empt}\label{can_fl_bundle}{\it Canonical flat connection and flat bunles}. 
There is a specific case of flat principal bundle such that $P'=M'\times G$ and $\Gamma$ is a canonical flat connection \cite{koba_nomi:fgd}. In this case the existence of $P(M,G)$ depends only on $\pi_1(M)$ and does not depend on differential structure of $M$.
\end{empt}

\begin{empt}\label{comm_fund_k}{\it Local systems and $K$-theory}. 
If $R(G)$ is the group representation ring and $R_0(G)$ is a subgroup of zero virtual dimension then there is a natural homomorphism $R_0(G) \to K^0(M)$ described in \cite{gilkey:odd_space,wolf:const_curv}.
\end{empt}

\subsection{Topological noncommutative bundles with flat connections}
\paragraph{} There are noncommutative  generalizations of described in \ref{fvb_dg} constructions. According to Serre Swan theorem \cite{karoubi:k} any vector bundle over space $\mathcal{X}$ corresponds to a projective $C_0(\mathcal{X})$ module.
\begin{defn}
Let $\left(A, \widetilde{A}, G\right)$  be a  finite noncommutative covering projection. According to definition \ref{loc_sys_defn} any group homomorphism $G \to U(n)$ is a local system. There is a natural linear action of $G$ on $\mathbb{C}^n$, and $\widetilde{A}\square_G\mathbb{C}^n$ is a left $A$-module which is said to be a {\it topological noncommutative bundle with flat connection}.
\end{defn}
\begin{lem}
Let  $\left(A, \widetilde{A}, G\right)$ be a finite noncommutative covering projection, and let $P = \widetilde{A}\square_G\mathbb{C}^n$ be a  topological noncommutative bundle with flat connection. Then $P$ is a finitely generated projective left and right $A$-module.
\end{lem}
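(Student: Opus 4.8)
The plan is to realize $P=\widetilde{A}\square_G\mathbb{C}^n$ as a direct summand of a finitely generated projective module on each side, by combining the finite‑frame data of Theorem \ref{main_lem} with an averaging projection. I first reinterpret the cotensor product concretely. Converting the left $G$-action on $\widetilde{A}$ to a right action by $\widetilde{a}\triangleleft g := g^{-1}\widetilde{a}$ and unwinding the defining relation \eqref{cotensor_g}, a tensor $\sum_i\widetilde{a}_i\otimes v_i\in\widetilde{A}\otimes\mathbb{C}^n$ lies in $P$ precisely when $\sum_i(g^{-1}\widetilde{a}_i)\otimes v_i=\sum_i\widetilde{a}_i\otimes(gv_i)$ for every $g\in G$, where $G$ acts on $\mathbb{C}^n$ through $G\to U(n)$. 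Applying $g\otimes 1$ to this identity shows it is equivalent to $\sum_i\widetilde{a}_i\otimes v_i=\sum_i(g\widetilde{a}_i)\otimes(gv_i)$ for all $g$; hence $P=(\widetilde{A}\otimes\mathbb{C}^n)^G$ is exactly the fixed‑point space of the diagonal action $g\cdot(\widetilde{a}\otimes v)=(g\widetilde{a})\otimes(gv)$ on $\widetilde{A}\otimes\mathbb{C}^n\cong\widetilde{A}^{\,n}$.

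Next I would record that the finite covering structure makes $\widetilde{A}$ finitely generated projective over $A$ on each side. In the finite case the correspondence $X$ is replaced by $\widetilde{A}$ with the $A$-valued product \eqref{inv_scalar_product}, and the relation \eqref{1_mkx}, read through $\theta_{g\xi_i,g\xi_i}(\eta)=(g\xi_i)\langle g\xi_i,\eta\rangle_{\widetilde{A}}$, gives the finite reconstruction identity $\eta=\sum_{g\in G,\,i\in I}(g\xi_i)\langle g\xi_i,\eta\rangle_{\widetilde{A}}$ for all $\eta\in\widetilde{A}$. The two right $A$-linear maps $\eta\mapsto(\langle g\xi_i,\eta\rangle_{\widetilde{A}})_{g,i}$ and $(a_{g,i})\mapsto\sum_{g,i}(g\xi_i)a_{g,i}$ then compose to the identity, splitting $\widetilde{A}$ off the free module $A^{G\times I}$, so $\widetilde{A}$ is finitely generated projective as a right $A$-module; applying the involution $*$, which interchanges the left and right $A$-structures, yields the analogous splitting on the left. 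Consequently $\widetilde{A}^{\,n}$ is finitely generated projective both as a left and as a right $A$-module.

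Finally, since $G$ is finite and $|G|$ is invertible in $\mathbb{C}$, I would introduce the averaging operator $E(\widetilde{a}\otimes v)=\tfrac{1}{|G|}\sum_{g\in G}(g\widetilde{a})\otimes(gv)$ on $\widetilde{A}^{\,n}$. A short computation shows $E^2=E$ and $\mathrm{im}\,E=P$. The decisive point is that $E$ is $A$-bilinear: the equivariance of $\varphi$ in the Galois quadruple gives $\varphi(a)(g\widetilde{a})=g(\varphi(a)\widetilde{a})$, and since $g\widetilde{a}$ ranges over all of $\widetilde{A}$ this forces $g\varphi(a)=\varphi(a)$, i.e. each multiplier $\varphi(a)$ is $G$-invariant. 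Using this together with the fact that $g$ acts by $*$-automorphisms, one checks $E(a\cdot\zeta)=a\cdot E(\zeta)$ and $E(\zeta\cdot a)=E(\zeta)\cdot a$ for $\zeta\in\widetilde{A}^{\,n}$ and $a\in A$. Thus $E$ is an idempotent in $\mathrm{Hom}_A(\widetilde{A}^{\,n},\widetilde{A}^{\,n})$ of bimodules, $P=\mathrm{im}\,E$ is a direct summand of $\widetilde{A}^{\,n}$ on each side, and a direct summand of a finitely generated projective module is finitely generated projective; hence $P$ is finitely generated projective as a left and as a right $A$-module.

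The step I expect to be the main obstacle is the middle one: making fully rigorous that the frame data of Theorem \ref{main_lem}, which a priori only witness that $\widetilde{A}$ is a Hilbert $A$-module, genuinely force algebraic finite generation and projectivity over the plain ring $A$ on \emph{both} sides, in particular transferring the right‑module splitting to the left via the involution (equivalently, via the $A$-bimodule conditional expectation $\tfrac{1}{|G|}\sum_{g}g:\widetilde{A}\to A$). By contrast, verifying the $G$-invariance of $\varphi(a)$ and the resulting bilinearity of $E$ is routine once the equivariance relation is unwound.
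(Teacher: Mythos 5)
Your proposal is correct and follows essentially the same route as the paper: the paper also realizes $P$ as the image of the averaging idempotent $p(a\otimes x)=\tfrac{1}{|G|}\sum_{g\in G}ag\otimes g^{-1}x$ on $\widetilde{A}\otimes_{\mathbb{C}}\mathbb{C}^n\cong\widetilde{A}^n$, after asserting that $\widetilde{A}$ is finitely generated projective over $A$. You merely supply details the paper leaves implicit, namely the frame argument from Theorem \ref{main_lem} for the projectivity of $\widetilde{A}$ on both sides, the identification of the cotensor product with the diagonal fixed points, and the $A$-bilinearity of the averaging map.
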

\begin{proof}
According to definition $\widetilde{A}$ is a left finitely generated projective $A$-module. A left $A$-module $\widetilde{A}\otimes_{\mathbb{C}}\mathbb{C}^n$ is also finitely generated and projective because $\widetilde{A}\otimes_{\mathbb{C}}\mathbb{C}^n \approx \widetilde{A}^n$. There is a projection $p: \widetilde{A}\otimes_{\mathbb{C}}\mathbb{C}^n \to \widetilde{A}\otimes_{\mathbb{C}}\mathbb{C}^n$ given by:
\begin{equation*}
p(a \otimes x) = \frac{1}{|G|}\sum_{g\in G} ag \otimes g^{-1}x.
\end{equation*}
The image of $p$ is $P$, therefore $P$ is projective left $A$-module. Similarly we can prove that $P$ is a finitely generated projective right $A$-module
\end{proof}

\begin{exm}
Let $M$ be a differentiable manifold $M'\to M$ is a covering projection $P'=M' \times U(n)$ is a principal bundle with a canonical flat connection $\Gamma'$. So there are all ingredients of \ref{dg_fl_con_ingr}. So we have a principal bundle $P(M, U(n))$ with a flat connection $\Gamma$. There is a  noncommutative covering projection $\left(C(M), C(M'), _{C(M')}X_{C(M)}, G\right)$. Let $\mathcal{F}$ (resp.  $\mathcal{F'}$) be a vector bundle associated with $P(M,U(n))$ (resp. $P(M',U(n))$), and let $F$ (resp. $F'$) be a projective finitely generated $C(M)$ (resp. $C(M')$ module which corresponds to $\mathcal{F}$ (resp.  $\mathcal{F'}$). Then we have  $F = C(M')\square_GF'$, i.e. $F$ is a topological flat bundle. 
\end{exm}
\begin{rem}
Since existence of $P(M, U(n)$ depend on topology of $M$ only we use a notion "topological noncommutative bundle with flat connection" is used for its noncommutative generalization.
\end{rem}
\begin{exm}\label{nc_torus_fin_cov}
Let $A_{\theta}$ be a noncommutative torus $\left(A_{\theta}, A_{\theta'}, \mathbb{Z}_m\times\mathbb{Z}_n\right)$ a Galois triple described in \cite{ivankov:infinite_cov_pr}. Any group homomorphism $\mathbb{Z}_m\times\mathbb{Z}_n\to U(1)$ induces a topological noncommutative flat bundle.
\end{exm}
\subsection{General noncommutative bundles with flat connections}
\begin{empt}\label{n_f_b_constr} A vector fibre bundle with a flat connection is not necessary a topological bundle with flat connection, since proposition \ref{fvb_dg} and construction \ref{dg_fl_con_ingr} does not require it. However general case of \ref{fvb_dg} and construction \ref{dg_fl_con_ingr} have a noncommutative analogue. The analogue requires a noncommutative generalization of differentiable manifolds with flat connections. Generalization of a spin manifold is a spectral triple \cite{connes:c_alg_dg,connes:ncg94,varilly:noncom,varilly_bondia}. 
First of all we generalize the proposition \ref{comm_cov_mani}.

Suppose that there is a spectral triple $(\mathcal{B}, H, D)$ such that
\begin{itemize}
\item $\mathcal{B} \subset B$ is a pre-$C^*$-algebra which is a dense subalgebra in $B$.
\item there is a faithful representation $B \to B(H)$.
\end{itemize}
Let $\left(B, A, G\right)$ be a finite noncommutative covering projection.
According to 8.2 of \cite{ivankov:infinite_cov_pr} there is the spectral triple $(\mathcal{A}, A \otimes_BH, \widetilde{D} )$ such that
 \begin{itemize}
\item $\mathcal{A} \subset A$ is a pre-$C^*$-algebra which is a dense subalgebra of $A$.
\item $\widetilde{D}gh = g\widetilde{D}h$, for any $g \in G$, $h \in \Dom \widetilde{D}$.
\end{itemize}

Let $\mathcal{F}$ be a finite projective right $\mathcal{B}$-module with a flat connection $\nabla: \mathcal{F} \to \mathcal{F} \otimes_{\mathcal{B}} \Omega^1(\mathcal{B})$.
Let $\mathcal{E} = \mathcal{F}\otimes_{\mathcal{B}} \mathcal{A}$ be a projective finitely generated $\mathcal{A}$-module and the action of $G$ on $\mathcal{E}$ is induced by the action of $G$ on $\mathcal{A}$. According to \cite{ivankov:infinite_cov_pr} connection $\nabla$ can be naturally lifted to $\widetilde{\nabla}: \mathcal{E}\to \mathcal{E} \otimes_{\mathcal{B}} \Omega^1(\mathcal{B})$.  Let $\mathcal{E}'$ be an isomorphic to $\mathcal{E}$ as $\mathcal{A}$-module and there is an action of $G$ on $\mathcal{E}'$ such that
\begin{equation}\label{twisted_act}
g(xa)=(gx)(ga); \ \forall x \in \mathcal{E}, \ \forall a \in \mathcal{A}, \ \forall g \in G.
\end{equation}
Different actions of $G$ give different $\mathcal{B}$-modules $\mathcal{F} = \mathcal{E}\square_G \mathcal{A}$, $\mathcal{F}' = \mathcal{E}'\square_G \mathcal{A}$. Both $\mathcal{F}$ and $\mathcal{F}'$ can be included into following  sequences
\begin{equation}\label{seqf}
 \mathcal{F} \xrightarrow{i}  \mathcal{E} \xrightarrow{p} \mathcal{F},
\end{equation}
\begin{equation*}
 \mathcal{F}' \xrightarrow{i'}  \mathcal{E}' \xrightarrow{p'} \mathcal{F}'.
\end{equation*}
These sequences induce following
\begin{equation}\label{seqfo}
 \mathcal{F} \otimes_{\mathcal{B}} \Omega^1(\mathcal{B}) \xrightarrow{i \otimes \mathrm{Id}_{\Omega^1(\mathcal{B})}}  \mathcal{E}\otimes_{\mathcal{B}}\Omega^1(\mathcal{B}) \xrightarrow{p \otimes \mathrm{Id}_{\Omega^1(\mathcal{B})}} \mathcal{F}\otimes_{\mathcal{B}}\Omega^1(\mathcal{B}),
\end{equation}
\begin{equation*}
 \mathcal{F}' \otimes_{\mathcal{B}} \Omega^1(\mathcal{B}) \xrightarrow{i' \otimes \mathrm{Id}_{\Omega^1(\mathcal{B})}}  \mathcal{E}'\otimes_{\mathcal{B}}\Omega^1(\mathcal{B}) \xrightarrow{p' \otimes \mathrm{Id}_{\Omega^1(\mathcal{B})}} \mathcal{F}'\otimes_{\mathcal{B}}\Omega^1(\mathcal{B}),
\end{equation*}
The connection $\nabla$ is given by
\begin{equation*}
\nabla p(x) = \left(p \otimes \mathrm{Id}_{\Omega^1(\mathcal{B})}\right)\left(\widetilde{\nabla}(x)\right); \ x \in \mathcal{E}. 
\end{equation*}
From \eqref{seqf} and \eqref{seqfo} it follows that if $y\in \mathcal{F}$ then $\nabla y$ does not depend on $x \in \mathcal{E}$ such that $y=p(x)$. Similarly there is a flat connection  $\nabla': \mathcal{F}' \to \mathcal{F}' \otimes_{\mathcal{B}} \Omega^1(\mathcal{B})$ given by 
\begin{equation*}
\nabla' p'(x) = \left(p' \otimes \mathrm{Id}_{\Omega^1(\mathcal{B})}\right)\left(\widetilde{\nabla'}(x)\right); \ x \in \mathcal{E}'.  
\end{equation*}
Following table explains a correspondence between the proposition \ref{comm_cov_mani} and the above construction.
\newline
\newline
\break
\begin{tabular}{|c|c|}
\hline
DIFFERENTIAL GEOMETRY & SPECTRAL TRIPLES\\
\hline
Manifold $M$ & Spectral triple $(\mathcal{B}, H, D)$\\
The  covering manifold $E$   & Spectral triple $(\mathcal{A}, \mathcal{A} \otimes_{\mathcal{B}}H, \widetilde{D} )$ \\
A regular covering projection $E\to M$  & A noncommutative covering projection $(B, A, G)$ \\
Group of covering transformations $\pi_1(M)/H$  & Group of noncommutative covering transformations $G$ \\
A connection on vector fibre bundle $F\to M$  & An operator $\nabla: \mathcal{F} \to \mathcal{F} \otimes_{\mathcal{B}} \Omega^1(\mathcal{B})$ \\
\hline
\end{tabular}
\newline
\newline
\break
\end{empt}
\begin{exm}
Let $(\mathcal{A}_{\theta}, H, D)$ be a spectral triple associated to a noncommutative torus $A_{\theta}$ generated by unitary elements $u,v\in A_{\theta}$.
Let $\mathcal{F} = \mathcal{A}^4_{\theta}$ be a free module and let $e_1,..., e_4 \in  \mathcal{F}$ be its generators. Let $\nabla: \mathcal{F} \to \mathcal{F} \otimes \Omega^1(\mathcal{A}_{\theta})$ be a connection given by
\begin{equation*}
\nabla e_1 = c_u e_2 \otimes du, \ \nabla e_2 = -c_u e_1 \otimes du, \ \nabla e_3 = c_v e_4 \otimes dv, \ \nabla e_4 = -c_v e_3 \otimes dv.
\end{equation*}
where $c_u, c_v \in \mathbb{R}$. According to \cite{ivankov:nc_wilson_lines} the connection $\nabla$ is flat. Let $\left(A_{\theta}, A_{\theta'}, \mathbb{Z}_m\times\mathbb{Z}_n\right)$ a Galois triple from example \ref{nc_torus_fin_cov}. This data induces a spectral triple $\left(\mathcal{A}_{\theta'}, \mathcal{A}_{\theta'}\otimes_{\mathcal{A}_{\theta}}H, D\right)$. If $\mathcal{E} = \mathcal{F} \otimes_{\mathcal{A}_{\theta}}\mathcal{A}_{\theta'}$ then
\begin{equation}
\mathcal{E} \approx \mathcal{A}_{\theta'}\otimes \mathbb{C}^{4} \approx \mathcal{A}_{\theta}\otimes \mathbb{C}^{4nm} 
\end{equation}
and there is a natural connection $\widetilde{\nabla}: \mathcal{E} \to \mathcal{E}\otimes_{\mathcal{A}_{\theta}}\Omega^1\left(\mathcal{A}_{\theta}\right)$.
Let $\rho : \mathbb{Z}_m\times\mathbb{Z}_n \to U(4)$ be a nontrivial representation. There is an action of  $\mathbb{Z}_m\times\mathbb{Z}_n$ on  $\mathcal{E}' = \mathcal{A}_{\theta'}\otimes \mathbb{C}^{4}$ given by
\begin{equation*}
g (a \otimes x) = ga \otimes \rho(g)x; \ a \in \mathcal{A}_{\theta}, \ x \in \mathbb{C}^4.
\end{equation*}
which satisfies \eqref{twisted_act}. Then $\mathcal{F}' =  \mathcal{A}_{\theta'}\square_{\mathbb{Z}_m\times\mathbb{Z}_n}\mathcal{E}'$ is a finitely generated $A_{\theta}$ module with a connection $\nabla': \mathcal{F}' \to \mathcal{F} \otimes \Omega^1(\mathcal{A}_{\theta})$ given by the construction \ref{n_f_b_constr}.
\end{exm}

\subsection{Noncommutative bundles with flat connections and $K$-theory}
\paragraph{}A homomorphism $R_0(G) \to K^0(M)$ from \ref{comm_fund_k} can be generalized. Let $\left(A, \widetilde{A}, G\right)$ be a finite noncommutative covering projection and $\rho: G \to U(n)$ is a representation, $\mathrm{triv}_n: G \to U(n)$ is the trivial representation. Suppose that an action if $G$ on $\mathbb{C}^n$ is given by $\rho$. Then a homomorphism $R_0(G) \to K(A)$ is given by
\begin{equation*}
[\rho] - \left[\mathrm{triv}_n\right] \mapsto \left[\widetilde{A}\square_G\mathbb{C}^n\right] - \left[A^n\right].
\end{equation*}

\section{Noncommutative generalization of Borel construction}

\begin{empt}
There is a noncommutative generalization of the Borel construction \ref{borel_const_comm}
\end{empt}
\begin{defn}\label{borel_const_ncomm}
Let $A$, $B$ be $C^*$-algebras, let $G$ be a group which acts on both $A$ and $B$. Let $A\otimes_{\mathbb{C}} B$ is any tensor product such that $A\otimes_{\mathbb{C}} B$ is a $C^*$-algebra. The norm closure of generated by
\begin{equation*}
C = \left\{\sum_i a_i\otimes b_i\in A\otimes_{\mathbb{C}} B~|~\sum_i a_i g\otimes b_i=
\sum_i a_i\otimes gb_i;~\forall g\in G\right\}.
\end{equation*}

subalgebra  is said to be a {\it cotensor product of $C^*$-algebras}. Denote by $A \square_G B$ the cotensor product.
\end{defn}
\begin{rem}
We do not fix a type of a tensor product because different applications can use different tensor products (See \cite{bruckler:tensor}).
\end{rem}

\begin{exm}
Let $\mathcal{X}$, $\mathcal{Y}$ be locally compact Hausdorff spaces and let $G$ be a finite or countable group which acts on both $\mathcal{X}$ and $\mathcal{Y}$. Suppose that action on $\mathcal{X}$ (resp. $\mathcal{Y}$) is right (resp. left).
Then there is natural right (resp. left) action of on $C_0(\mathcal{X})$ (resp. $C_0(\mathcal{Y})$). From \cite{bruckler:tensor} it follows that the minimal and the maximal norm on $C_0(\mathcal{X}) \otimes_{\mathbb{C}} C_0(\mathcal{Y})$ coincide. It is well known that $C_0(\mathcal{X}\times\mathcal{Y}) \approx C_0(\mathcal{X}) \otimes_{\mathbb{C}} C_0(\mathcal{Y})$. Let $\mathcal{Z} = \mathcal{X}\times\mathcal{Y} / \approx$ where $\approx$ is given by
\begin{equation*}
(xg, y) \approx (x, g^{-1}y).
\end{equation*}
It is clear that $C_0(\mathcal{Z}) \approx C_0(\mathcal{X}) \square_G C_0(\mathcal{Y})$. 
\end{exm}

\begin{defn}
Let $\left(A, \widetilde{A}, _{\widetilde{A}}X_A, G\right)$ be a Galois quadruple such that there is right action of $G$ of $\widetilde{A}$ and left action of $G$ on $C^*$-algebra $B$. A cotensor product $\widetilde{A}\square_G B$ is said to be a {\it noncommutative Borel construction}.
\end{defn}

\begin{exm}
Let $p: \widetilde{\mathcal{B}}\to \mathcal{B}$ be a topological normal covering projection of locally compact topological spaces, and $G = G(\widetilde{\mathcal{B}}| \mathcal{B})$ is a group of covering transformations. Then $p$ is a principal $G(\widetilde{\mathcal{B}}| \mathcal{B})$-bundle. Let $\mathcal{F}$ be a locally compact topological space with action of $G$ on it. Then there is a natural isomorphism with the $C^*$-algebra of a topological Borel construction
\begin{equation*}
C_0(\widetilde{\mathcal{B}}\times_G\mathcal{F})\approx C_0(\widetilde{\mathcal{B}})\square_GC_0(\mathcal{F}).
\end{equation*}
\end{exm}

%For a fibre bundle (or fibration) $F\to E\to B$ the long exact homotopy sequence $\cdots \to \pi_2(B)\to \pi_1(F)\to\pi_1(E) \to \pi_1(B)\to \pi_0(F) \to \cdots$ gives some information on the fundamental group of the total space: It is a group extension of a subgroup of $\pi_1(B)$ by a quotient of $\pi_1(F)$. (In the example of the Hopf fibration mentioned above this quotient is trivial.)

\end{document}